\title{Varieties of planes on intersections of three quadrics}
\author{Brendan Hassett and Yuri Tschinkel}
\date{April 2019}
\theoremstyle{plain}
\theoremstyle{plain}
\newtheorem{prop}{Proposition}
\newtheorem{theo}[prop]{Theorem}
\theoremstyle{definition}
\newtheorem{ques}[prop]{Question}
\newtheorem{rema}[prop]{Remark}
\newcommand{\bC}{\mathbb C}
\newcommand{\bG}{\mathbb G}
\newcommand{\bP}{\mathbb P}
\newcommand{\bQ}{\mathbb Q}
\newcommand{\bZ}{\mathbb Z}
\newcommand{\cC}{\mathcal C}
\newcommand{\cE}{\mathcal E}
\newcommand{\cI}{\mathcal I}
\newcommand{\cO}{\mathcal O}
\newcommand{\cX}{\mathcal X}
\newcommand{\ra}{\rightarrow}
\newcommand{\Bl}{\operatorname{Bl}}
\newcommand{\Br}{\operatorname{Br}}
\newcommand{\Gr}{\operatorname{Gr}}
\newcommand{\Sym}{\operatorname{Sym}}
\newcommand{\Pic}{\operatorname{Pic}}
\newcommand{\prim}{\operatorname{prim}}
\begin{document}
\maketitle

\begin{abstract}
We study the geometry of spaces of planes on smooth complete intersections of three quadrics, with a view toward rationality questions.
\end{abstract}

\section{Introduction}

This note studies the geometry of smooth complete intersections of three quadrics $X\subset \bP^n$, with a view toward rationality questions over nonclosed fields $k$.  We review what is known over $\bC$:
\begin{itemize}
\item{$X$ is irrational for $n\le 6$ \cite{BeauvilleThesis};}
\item{$X$ may be either rational or irrational for $n=7$, and the rational ones are dense in moduli \cite{HPT17};}
\item{$X$ is always rational for $n\ge 8$ \cite[Cor.~5.1]{Tjurin}.}
\end{itemize}
The analysis in higher dimensions relies on the geometry of planes in $X$. 
Indeed, when $X$ contains a plane $P$ defined over $k$ then projection from $P$ gives a birational map
$$\pi_{P}: X \stackrel{\sim}{\dashrightarrow} \bP^{n-3}.$$
This leads us to study the variety of planes $F_2(X) \subset \Gr(3,n+1)$. When $n\ge 12$, the geometry
of these varieties gives a quick and uniform proof of rationality over finite fields and function fields 
of complex curves (see Theorem~\ref{theo:high}).  

We are therefore interested in the intermediate cases $n=8,9,10,11$, and especially in $n=8$ and $9$. 
For generic $X\subset \bP^8$, the variety $F_2(X)$ is finite of degree $1024$ (see Proposition~\ref{prop:finite});
we explore the geometry of the associated configurations of planes in $\bP^8$. We then focus most on the
case $n=9$. Here the variety $F_2(X)$ is a threefold of general type with complicated geometry -- we analyze its numerical invariants.

Our original motivation was to understand certain {\em singular} complete intersections of three quadrics in 
$\bP^9$ associated with universal torsors over degree 4 del Pezzo surfaces fibered in conics over $\bP^1$ \cite{CTSan}.
Conjectures of Colliot-Th\'el\`ene and Sansuc predict that such torsors are {\em rational} when they admit a point, over number fields.
Rationality of torsors has significant arithmetic applications, e.g., to proving the uniqueness of the Brauer--Manin obstruction
to the Hasse principle and weak approximation. It has geometric consequences as well, e.g., 
the construction of new examples of nonrational but stably rational threefolds over $\bC$. 
The geometry of {\em smooth} intersections, presented here, turned out to be quite rich and interesting on its own.    

Here is a road map of the paper: 
Section~\ref{sect:high} presents uniform proofs of rationality for high-dimensional cases. Section~\ref{sect:determinant} is
devoted to determinantal presentations of plane curves arising as degeneracy loci of the net of quadrics. 
In Sections~\ref{sect:schubert} and \ref{sect:koszul}, we turn to numerical invariants of the variety of planes. 
Our main results concern the computation of 
degrees and 
cohomology 
of $F_2(X)$, for $X$ a smooth intersection of three quadrics, in $\bP^8$ and $\bP^9$. 

\

\noindent
{\bf Acknowledgments:} 
The first author was partially supported by NSF grants 1551514 and 1701659, and the Simons Foundation; the second author was partially supported by NSF grant 1601912.

\section{Uniform rationality in high dimensions}
\label{sect:high}
\begin{theo}  \label{theo:high}
Let $k$ be a finite field or the function field of a complex curve. 
Suppose that $X\subset \bP^n, n\ge 12,$ is a smooth complete
intersection of three quadrics. Then $X$ is rational.
\end{theo}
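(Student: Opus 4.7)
\smallskip
\noindent\textbf{Proof proposal.}
The plan is to produce a $k$-rational plane $P\subset X$; since the introduction records that projection from such a $P$ yields a birational isomorphism $\pi_P:X\dashrightarrow\bP^{n-3}$, it suffices to show $F_2(X)(k)\neq\emptyset$ whenever $n\ge 12$.

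First I would realize $F_2(X)\subset G:=\Gr(3,n+1)$ as the zero scheme of a section of the rank-$18$ vector bundle $\cE:=(\Sym^2\cS^\vee)^{\oplus 3}$, where $\cS$ is the tautological rank-$3$ subbundle on $G$. Each of the three quadrics defining $X$ restricts on every plane $P\subset\bP^n$ to a quadratic form, producing a section of $\Sym^2\cS^\vee$; the plane lies on $X$ precisely when these three sections vanish simultaneously at $[P]$. Because $\cS^\vee$ is a quotient of a trivial bundle, $\cE$ is globally generated, so a Bertini-type argument shows that for a general smooth $X$ the scheme $F_2(X)$ is smooth of the expected dimension $3(n-2)-18=3n-24\ge 12$.

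Next I would compute $K_{F_2(X)}$ by adjunction. The splitting principle applied to the rank-$3$ bundle $\cS^\vee$ gives $c_1(\Sym^2\cS^\vee)=4\,\sigma_1$, where $\sigma_1$ denotes the Pl\"ucker hyperplane class, and $K_G=-(n+1)\sigma_1$, so
$$K_{F_2(X)}=\bigl(K_G+c_1(\cE)\bigr)\big|_{F_2(X)}=\bigl(12-(n+1)\bigr)\,\sigma_1\big|_{F_2(X)}=(11-n)\,\sigma_1\big|_{F_2(X)}.$$
For $n\ge 12$ this right-hand side is anti-ample, so $F_2(X)$ is a smooth Fano variety and therefore geometrically rationally chain connected.

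Finally I would invoke the appropriate existence theorem for rational points: over a finite field, Esnault's theorem supplies a $k$-point on any smooth projective, geometrically rationally chain connected variety; over $k=\bC(C)$ the same conclusion follows from Graber--Harris--Starr. Either way we obtain a $k$-rational plane $P\subset X$, and the projection $\pi_P$ yields the desired birational map to $\bP^{n-3}$.

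The main obstacle I anticipate is upgrading the Bertini step from \emph{generic} smooth $X$ to \emph{every} smooth $X$: one must argue that $F_2(X)$ has the required smoothness and Fano properties regardless of which smooth complete intersection is chosen. I expect this is handled either by verifying transversality of the defining section of $\cE$ directly at every plane contained in a smooth $X$, or by passing to a resolution or an appropriate irreducible component of $F_2(X)$; both Esnault's theorem and Graber--Harris--Starr are flexible enough to accommodate such modifications provided rational (chain) connectedness survives.
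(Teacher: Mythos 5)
Your argument matches the paper's for generic $X$: realize $F_2(X)$ as the zero locus of a section of $(\Sym^2 S^*)^{\oplus 3}$ on $\Gr(3,n+1)$, compute $K_{F_2(X)}=(12-n-1)\sigma_1$ by adjunction, conclude Fano (hence rationally connected) for $n\ge 12$, and apply Graber--Harris--Starr over function fields and Esnault over finite fields to produce a $k$-rational plane, then project. Up to that point you are reproducing the paper's proof.

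The genuine gap is exactly the one you flag at the end and then leave unresolved: the theorem is asserted for \emph{every} smooth $X$, and for special smooth $X$ the scheme $F_2(X)$ can be singular or of the wrong dimension, in which case it need not be rationally chain connected at all --- so neither Esnault nor Graber--Harris--Starr applies directly, and neither of your proposed repairs works. Verifying transversality of the defining section at every plane of every smooth $X$ is false in general (that is precisely what fails for non-generic $X$), and passing to a resolution does not help because rational connectedness is not something you can recover by resolving a variety that fails to be rationally connected. The paper closes this gap by a specialization argument (attributed to Starr): given the family $\cX_\circ \ra B$ over a complex curve $B$, one embeds $B$ as an irreducible component of the degenerate member $\cC_0$ of a one-parameter family of complete-intersection curves $\cC_t$ in the Hilbert scheme, chosen to meet the locus where $F_2$ is smooth of the expected dimension; the fibrations $F_2(\cX_t)\ra\cC_t$ admit sections for general $t$ by Graber--Harris--Starr, and sections persist as $t\ra 0$, restricting to a section over $B$. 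Over finite fields the analogous step uses not Esnault's theorem itself but its specialization version due to Fakhruddin and Rajan, which produces rational points on degenerate fibers of families whose general fiber is of the required type. Without some such degeneration argument your proof only covers generic $X$.
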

\begin{proof}
The results of \cite{DebMan} show that for $X$ generic over $\bar{k}$,
the variety $F_2(X)\subset \Gr(3,n+1)$ is smooth and connected of the expected dimension $3n-24$. 
The adjunction formula implies that $F_2(X)$ has canonical class
$$K_{F_2(X)} = (12 - n-1) \sigma_1;$$
here $\sigma_1$ is the hyperplane class from the Pl\"ucker embedding -- see Section~\ref{sect:schubert}
for details. In particular, $F_2(X)$ is Fano for $n\ge 12$. 

Suppose $k$ is a function field. When $F_2(X)$ is smooth of the expected dimension, we have
$F_2(X)(k)\neq \emptyset$, by the Graber-Harris-Starr theorem \cite{GHS}. 
What if $F_2(X)$ is singular or fails to have the expected dimension?
However, we still have rational points, by an argument of Starr: Suppose that $\cX_{\circ} \ra B$ is a family 
of varieties over a complex projective curve $B$ corresponding to a morphism $B \ra \operatorname{Hilb}$
to the appropriate Hilbert scheme. There exists a one-parameter family of curves $\cC_t$, meeting
the locus over which $F_2$ is smooth of the expected dimension, such that $\cC_0$ contains $B$
as an irreducible component. (Take the $\cC_t$ to be complete intersection curves in $\operatorname{Hilb}$.)
For the induced families $\cX_t \ra \cC_t$, the fibrations $F_2(\cX_t)\ra \cC_t$ admit sections
for most $t$. This remains true as $t\ra 0$; restricting to $B$ gives a section of $F_2(\cX_{\circ})\ra B$.

A similar argument holds over finite fields, using Esnault's Theorem \cite{Esnault} and the specialization
version due to Fakhruddin and Rajan \cite{FR}.
\end{proof}

\section{Determinantal representations}
\label{sect:determinant}

\subsection{Recollection of invariants} \label{subsect:recall}
We recall formulas that may be obtained from Appendix I of
Hirzebruch's {\em Topological Methods}, specifically,
\cite[Th.~22.1.1]{hirzebruch}.

\begin{prop}
Suppose that $n=2m$. Then we have
$$\chi(X)=-4m(m-1)$$
and
$$h^{m-1,m-2}(X)=h^{m-2,m-1}(X)=\binom{2m}{2}-1=2m^2-m-1.$$
The other Hodge numbers of weight $2m-3$ vanish.
\end{prop}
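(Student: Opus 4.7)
The plan is to combine Hirzebruch's generating function for the $\chi_y$-genus of a smooth complete intersection (Theorem 22.1.1 of \emph{Topological Methods}) with the weak Lefschetz theorem, specialized to multidegree $(2,2,2)$ in $\bP^{2m}$.

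Since $\dim X = 2m - 3$ is odd, the Lefschetz hyperplane theorem together with Poincar\'e duality give $H^k(X) \cong H^k(\bP^{2m})$ as Hodge structures for every $k \ne 2m - 3$. The $2m - 2$ even classes in this range each contribute $1$ to $\chi(X)$, so
$$\chi(X) = (2m - 2) - h^{2m - 3}(X, \bC).$$
Specializing Hirzebruch's formula to $y = -1$, equivalently pairing the top Chern class read off from $c(T_X) = (1+H)^{2m+1}/(1+2H)^3$ against $[X] = 8 H^3$ on $\bP^{2m}$, yields
$$\chi(X) = 8 \cdot [H^{2m-3}]\, \frac{(1+H)^{2m+1}}{(1+2H)^3} = -4m(m-1),$$
the final identity being a routine binomial manipulation. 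Substituting back gives $h^{2m-3}(X, \bC) = 2(2m^2 - m - 1)$.

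For the Hodge-level statement, that only $h^{m-1, m-2}$ and $h^{m-2, m-1}$ are nonzero in weight $2m-3$, I would follow the same Hirzebruch machinery, now reading the coefficient of $y^p$ in the full $\chi_y$-series for each $p$ in the middle weight range. A conceptual alternative combines the Lefschetz theorem (giving $h^{p, 0}(X) = 0$ for $0 < p < 2m - 3$) with the Fano vanishing $h^{2m-3, 0}(X) = h^0(X, \cO_X(5 - 2m)) = 0$, valid for $m \ge 3$, at the boundary, and then analyzes $H^q(X, \Omega^p_X)$ for intermediate $p$ via the conormal sequence
$$0 \to \cO_X(-2)^{\oplus 3} \to \Omega^1_{\bP^{2m}}|_X \to \Omega^1_X \to 0,$$
its exterior powers, Bott's theorem on $\bP^{2m}$, and Kodaira--Akizuki--Nakano vanishing.

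The main obstacle I expect is the intermediate cases $p = 1, \ldots, m-3$ of this vanishing: they require iterated long exact sequences coming from the Koszul filtration on $\Omega^p_X$, and the bookkeeping can become delicate. Once all vanishings are established, Hodge symmetry and the computed middle Betti number $2(2m^2 - m - 1)$ immediately force $h^{m-1, m-2} = h^{m-2, m-1} = 2m^2 - m - 1$, completing the proof.
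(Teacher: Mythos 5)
Your proposal is correct and follows essentially the same route as the paper, which simply cites Hirzebruch's Theorem 22.1.1 (together with Beauville's thesis) for these formulas: the Lefschetz hyperplane theorem fixes every $h^{p,q}$ outside the middle weight, so the coefficient $\chi^p$ of $y^p$ in the $\chi_y$-genus determines each middle Hodge number via $\chi^p=(-1)^p\bigl(1-h^{p,2m-3-p}\bigr)$, and your Chern-class evaluation $8\cdot[H^{2m-3}](1+H)^{2m+1}(1+2H)^{-3}=-4m(m-1)$ checks out. The Bott-vanishing ``conceptual alternative'' you sketch is unnecessary (and, as you note, the delicate part); the primary $\chi_y$-genus argument already suffices.
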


\begin{prop}
Suppose that $n=2m-1$. Then we have
$$\chi(X)=4m(m-1)$$
and
$$h^{m-1,m-3}(X)=h^{m-3,m-1}(X)=\binom{m-1}{2}, 
h^{m-2,m-2}(X)=3m^2-3m+2.$$
\end{prop}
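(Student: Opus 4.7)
The plan is to apply the template of Hirzebruch's Appendix~I \cite[Th.~22.1.1]{hirzebruch} to a smooth complete intersection $X$ of type $(2,2,2)$ in $\bP^{2m-1}$: first compute $\chi(X)$ from the top Chern class, then use the Lefschetz hyperplane theorem to confine the unknown Hodge numbers to the middle weight $2m-4$, and finally use Hirzebruch's $\chi_y$-genus formula to extract the individual primitive Hodge numbers. Throughout, the even-dimensional case of the previous proposition provides useful sanity checks on the combinatorics.

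For the Euler characteristic, the normal bundle sequence
\[
0\to T_X\to T_{\bP^{2m-1}}|_X\to \cO(2)^{\oplus 3}|_X\to 0
\]
together with the Euler sequence gives $c(T_X)=(1+h)^{2m}/(1+2h)^{3}$ in terms of the hyperplane class $h$. Since $[X]=8h^3$ and $\dim X=2m-4$,
\[
\chi(X)\;=\;8\,\bigl[h^{2m-4}\bigr]\,\frac{(1+h)^{2m}}{(1+2h)^{3}},
\]
and a direct residue or partial-fractions computation yields $4m(m-1)$.

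By the Lefschetz hyperplane theorem, $H^{k}(X,\bC)\cong H^{k}(\bP^{2m-1},\bC)$ for $k\ne 2m-4$, so outside middle weight the only nonzero Hodge numbers are $h^{p,p}=1$ for $0\le p\le 2m-4$, $p\ne m-2$. In middle weight the Lefschetz class $h^{m-2}$ contributes $1$ to $h^{m-2,m-2}(X)$, and all remaining contributions are primitive. The central vanishing statement is that $h^{p,q}_{\mathrm{prim}}(X)=0$ for $p+q=2m-4$ with $p\notin\{m-3,m-2,m-1\}$; this reflects the small degrees of the defining quadrics relative to the ambient dimension and can be established either by reading off the appropriate coefficients in Hirzebruch's generating function, or equivalently from the Jacobian/Koszul cohomology description of primitive cohomology for complete intersections.

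With the vanishing in hand, complex conjugation gives $h^{m-1,m-3}(X)=h^{m-3,m-1}(X)$, and either of these can be computed by extracting the coefficient of $y^{m-3}$ in Hirzebruch's $\chi_y$-formula (equivalently, by applying Hirzebruch-Riemann-Roch to $\Omega^{m-3}_X$ and subtracting the Lefschetz contribution); the result is $\binom{m-1}{2}$. The value of $h^{m-2,m-2}(X)$ is then forced by
\[
\chi(X)\;=\;(2m-4)\;+\;2\,h^{m-1,m-3}(X)\;+\;h^{m-2,m-2}(X),
\]
which gives $3m^2-3m+2$. The main obstacle is establishing the vanishing of off-central primitive Hodge numbers cleanly; once this concentration is in hand, the rest of the argument reduces to coefficient extraction from a concrete generating function and the elementary identity above.
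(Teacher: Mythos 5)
Your proposal is correct and follows essentially the same route as the paper, which simply cites Hirzebruch's Theorem~22.1.1 (Appendix~I of \emph{Topological Methods}) together with \cite{ogrady86}, \cite{Laszlo}, and \cite{BeauvilleThesis} for these formulas; your writeup just carries out the $c(T_X)=(1+h)^{2m}/(1+2h)^3$ Euler-characteristic computation and the $\chi_y$-genus coefficient extraction explicitly. The numbers check out (e.g.\ $m=3$ recovers the K3 values $1,20,1$ and $\chi=24$; $m=4$ gives $3,38,3$ and $\chi=48$), and the closing identity $\chi(X)=(2m-4)+2\binom{m-1}{2}+(3m^2-3m+2)=4m(m-1)$ is consistent.
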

The odd case follows from \cite{ogrady86} and \cite{Laszlo}; the even case
from \cite{BeauvilleThesis}. For reference, we give a tabulation of nontrivial middle Hodge numbers
$$
\begin{array}{r|c}
n & \text{Hodge numbers}\\
\hline
4 & 5 \quad 5 \\
5 & 1 \quad 20 \quad 1 \\
6 & 14 \quad 14 \\
7 & 3 \quad 38 \quad 3 \\
8 & 27 \quad 27 \\
9 & 6 \quad 62 \quad 6 \\
10 & 44 \quad 44 
\end{array}
$$

\subsection{Interpretation of cohomology}
Let $X\subset \bP^n$ be a smooth complete intersection of three
quadrics containing a plane $P$. It follows that $n\ge 7$ and $X$ is of
special moduli when $n=7$, as the expected dimension of the 
Fano variety of planes is
$$\operatorname{expdim} F_2(X) = 3(n-2) - 3\cdot 6 = 3n-24.$$
This case is studied in depth in \cite{HPT17}.

Projection from $P$ induces a birational map 
$X \stackrel{\sim}{\dashrightarrow}\bP^{n-3}$. The center $Y$ of the inverse has 
dimension $n-5$ and admits a fibration in quadric hypersurfaces of relative
dimension $n-7$ over $\bP^2$.
This is instrumental in computing and interpreting the cohomology of $X$.

Suppose $n$ is odd. Then $Y$ is fibered in even-dimensional quadrics and its
cohomology is governed by the associated double cover 
$$
S \ra \bP^2,
$$
branched over the degeneracy locus $D$, and the associated Brauer class $\eta \in \Br(S)[2]$,
when $n\ge 9$. It is possible to interpret $F_2(X)$ via moduli spaces of vector bundles over
$S$  \cite{Bhosle}.

When $n$ is even, $Y$ is fibered in odd-dimensional quadrics, degenerate over a plane
curve $D$ of degree $n+1$. The cohomology is obtained via a Prym construction arising from 
the associated double cover of $D$. 

\subsection{Equations of the center}

In general, write
$$\cE:= \cO^3_{\bP^{n-3}} \oplus \cO_{\bP^{n-3}}(-1) \hookrightarrow 
\cO_{\bP^{n-3}}^{n+1}$$
so that $\bP(\cE)=\Bl_P(\bP^n)$ with bundle morphism 
$\varpi:\bP(\cE) \ra \bP^{n-3}$. 
Quadrics containing $P$ correspond to elements
of 
$$\Gamma(\cO_{\bP(\cE)}(1) \otimes \varpi^*\cO_{\bP^{n-3}}(1)) = 
\Gamma(\cE^*(1)),
$$
and complete intersections of three such forms correspond to linear
transformations
$$\cO_{\bP^{n-3}}^3 \ra \cE^*(1)\simeq \cO_{\bP^{n-3}}(1)^3 \oplus \cO_{\bP^{n-3}}(2).$$
We are interested in their degeneracy loci, whose equations are given 
by minors of            
\begin{equation} \label{eqn:4by3}
\left( \begin{matrix} L_{11} & L_{12} & L_{13} \\
                        L_{21} & L_{22} & L_{23} \\
                        L_{31} & L_{32} & L_{33} \\
                        Q_1 & Q_2 & Q_3 \end{matrix} \right) 
\end{equation}
where the $L_{ij}$ are linear and the $Q_i$ are quadratic; maximal
minors yield one cubic equation and three quartic equations.

Let $Y\subset \bP^{n-3}$ denote the subscheme given by the maximal minors,
which generically has codimension two. The $(n+1)$-dimensional
linear series
$\Gamma(\cI_Y(4))$
induces a birational map
$$\bP^{n-3} \dashrightarrow X \subset \bP^n$$
inverse to $\pi_P$.  

The determinantal equations (\ref{eqn:4by3}) for $Y$ yield a rational map
$$\phi: Y \dashrightarrow \bP^2,$$
assigning to each rank-two matrix its one-dimensional kernel.
This fails to be defined along the locus $Z \subset Y$ where the
matrix (\ref{eqn:4by3}) has rank one. See Section~\ref{subsect:generic}
for more information about its geometry.

We shall analyze the cohomology of $X$ using the
cohomology of natural resolutions of $Y$, expressed via $\phi$ as quadric
bundles over $\bP^2$.

\subsection{Linear algebra}
We start with some linear algebra on generic $3\times 3$
matrices
$$R_1:=\bP^2 \times \bP^2  \subset R_2 \subset \bP^8,$$
defined by the various minors of $L=(L_{ij})$. 
Over $R_1$, we may write 
$$L_{ij}=u_iv_j, \quad i,j=1,2,3.$$
The $2\times 2$ minors induce the `Cramer map'
$$\iota: \Bl_{R_1}(\bP^8) \ra \bP^8.$$
Let $E$ denote the exceptional divisor and $\widetilde{R_2}$
the proper transform of $R_2$.
The target space is also stratified by rank
$$R'_1 \subset R'_2 \subset \bP^8$$
with $\iota(\widetilde{R_2}) \subset R'_1$ and $\iota(E)=R'_2.$

Let $M=(M_{ij})$ denote the natural coordinates on the target $\bP^8$,
so the graph of $\iota$ is given by
$$L\cdot M = M\cdot L = \text{diagonal matrix}.$$ 
In other words, we have the equations
$$ \sum_j L_{ij}M_{jk} = \sum_{\ell} M_{i\ell} L_{\ell k}=0 \quad i\neq k$$
and 
$$ \sum_j L_{ij}M_{ji} = \sum_{\ell} M_{k\ell} L_{\ell k} \quad i=k.$$
Thus $\widetilde{R_2}$ is given by
$$\det(L_{ij})=0, \quad M_{ij}M_{k\ell}-M_{i\ell}M_{kj}=0, \quad
L\cdot M = M\cdot L =0.$$
Writing $M_{ij}=x_iy_j$ yields equations
$$
\sum_j L_{ij}x_j = \sum_{\ell} y_{\ell} L_{\ell k}=0,
$$
for all indices $i,k$.

We obtain two small resolutions of $R_2$ by keeping track of 
just the kernel or the image of $L$, respectively. For example, we
may consider
$$\widehat{R_2} \subset \bP^8_{L_{ij}} \times \bP^2_{x_j}$$
given by the equations
$$L \left( \begin{matrix} x_1 \\ x_2 \\ x_3 \end{matrix} \right)=0.$$
The exceptional locus $\widehat{E} \subset \widehat{R_2}$ is a
$\bP^1$-bundle over $R_1$ -- for each rank one matrix we extract the
one-dimensional subspaces of its kernel. 
It has codimension two in $\widehat{R_2}$.

\subsection{Generic behavior} \label{subsect:generic}
Assume that $n=11$ and the $L_{ij}$ are
linearly independent.

On the locus $R_1$ where the upper $3\times 3$ matrix
has rank at most one, we 
may write $L_{ij}=u_iv_j$.
The remaining equations involving the $Q_i$ take the form
$$u_1v_2Q_1-u_1v_1Q_2 = u_2v_1Q_1-u_2v_2Q_2 = \cdots = 0.$$
Writing $q_i=Q_i(u_1v_1,\ldots,u_3v_3)$, we obtain
$$v_2q_1 - v_1q_2 = v_3q_1 - v_1q_3 = v_2 q_3 - v_3 q_2 =0.$$
Thus there exists an
$$F \in k[u_1,u_2,u_3;v_1,v_2,v_3]_{(2,1)}$$
such that 
$$
q_1=v_1F, \quad q_2=v_2F, \quad q_3=v_3F.
$$
The indeterminacy locus for the kernel map
$\phi$ is given by a bidegree $(2,1)$ hypersurface
$$
Z \subset \bP^2 \times \bP^2.
$$  

Resolving the kernel map
$\phi: Y \dashrightarrow \bP^2$ yields a 
small resolution $\widehat{Y} \ra Y$ with center $Z$.
Write the kernel of (\ref{eqn:4by3}) as 
$[x_1,x_2,x_3]$
so that
$$[L_{12}L_{23}-L_{13}L_{23},-L_{11}L_{23}+L_{13}L_{23},L_{11}L_{22}-L_{12}L_{21}]\sim[x_1,x_2,x_3]$$
etc.
The rank two matrices with this kernel correspond to a codimension-three
linear subspace 
$$\Lambda_{[x_1,x_2,x_3]}\simeq \bP^5 \subset \bP^8.$$
The $3\times 3$ minors involving the $Q_i$ are all proportional to
$$Q_1x_1+Q_2x_2+Q_3x_3.$$
Hence 
$$\phi:\widehat{Y} \ra \bP^2$$
is a quadric bundle of relative dimension four.

\subsection{Degenerate cases in small dimension}

\

\noindent
{$\mathbf{n=5}$}: 
Suppose we have a complete intersection of three quadrics 
in $\bP^5$ containing a plane
$$X=P \cup_B U.$$
Here $\pi_P:U \ra \bP^2$ is birational, realizing
$U=\Bl_Y(\bP^2)$, where 
$$\phi:Y=\{y_1,\ldots,y_9\} \hookrightarrow \bP^2$$
is a generic collection of nine points, and the imbedding
$$
Y \hookrightarrow \bP^5
$$ 
is via quartics vanishing at those nine points. 

Note there is a canonical cubic curve
$W \supset Y$ where the determinant of linear forms
vanishes. For each divisor
$$\Sigma \equiv_W 4g-Y$$
there is a unique quartic form -- modulo the defining equation of $W$ --
cutting out $\Sigma \cup Y$. 

\

\noindent
{$\mathbf{n=6}$}: 
Now consider a complete intersection of three quadrics in $\bP^6$ containing
a plane
$$
P \subset X \subset \bP^6.
$$
The threefold $X$ is a nodal Fano threefold, with six singularities along 
$P$. Note that $X$ depends on $21$ parameters, codimension six in
the parameter space of all complete intersections.

In this case, we have
$$
Y \subset W \subset \bP^3,
$$
where $W$ is a smooth cubic surface, $Y$ is a smooth curve with
$$
\deg(Y)=9, \quad \operatorname{genus}(Y)=9,
$$
residual to a twisted cubic $\Sigma \subset W$ in the complete intersection
of $W$ and a quartic.  
Write
$$\Pic(W)=\left<L,E_1,E_2,E_3,E_4,E_5,E_6\right>, \quad L=[\Sigma], $$
where the $E_i$ are pairwise disjoint $(-1)$-classes with $L\cdot E_i=0$.
Then we have
$$[Y]=11L-4E_1-\ldots-4E_6$$
and the residual twisted cubic to $\Sigma$, 
with divisor class
$$
5L-2E_1-\ldots-2E_6,
$$ 
realizes $Y$ is a septic plane curve with
six nodes. The corresponding linear series induces $\phi:Y \ra \bP^2$. 

The intermediate Jacobian of a minimal resolution
$\widetilde{X} \ra X$ is isomorphic to the Jacobian of $Y$.

\

\noindent
{$\mathbf{n=7}$}: 
The simplest smooth case, obtained by
taking a codimension-four linear section of the generic case.   
For generic choices of the linear section, $Z=\emptyset$ and
$\phi:Y \ra \bP^2$ is a double cover branched over a special octic
plane curve.
Thus we have 
$$
Y \subset W:=\{\det(L_{ij})=0 \} \subset \bP^4,
$$
where $W$ is a cubic threefold with six ordinary double points \cite{HT10}.

The surface $Y$ has Picard group
$$\begin{array}{c|cc}
    & K & h \\
\hline
K  &  2 & 7 \\
h  &  7 & 9
\end{array}
$$
where $K$ is the canonical class -- pulled back from $\bP^2$ -- and
$h$ is associated with the embedding 
$$Y\hookrightarrow W \hookrightarrow \bP^4.$$
There is a second morphism $Y \rightarrow \bP^2$ associated with the
divisor $2h-K$ because
\begin{align*}
\chi(2h-K)& =\frac{(2h-K)^2-(2h-K)K}{2}+4 \\
                &= \frac{36-28+2-14+2}{2}+4=3.
\end{align*}
The residual intersection to $Y$ in the complete intersection of
$W$ and a quartic hypersurface is a cubic scroll $\Sigma$.
By \cite{HT10}, $Y$ admits two families of such scrolls,
each parametrized by $\bP^2$. An adjunction computation shows that
$$\Sigma \cap W \equiv_Y 2h-K.$$  

\begin{rema}
The lattice $\left<K,h\right>$ has discriminant $-31$. 
Consider the lattice
$$\left<g^2,P\right> \subset H^4(X,\bZ)$$
under the intersection pairing, where $g$ is the hyperplane class.
A Chern-class computation gives 
$P^2=4$ whence
$$\begin{array}{c|cc}
    & g^2 & P \\
\hline
g^2  &  8 & 1 \\
P    &  1 & 4
\end{array},
$$
which has discriminant $31$. 
The birational parametrization of $X$ induces an isomorphism of
Hodge structures
$$H^2(Y,\bZ)(-1) \supset \left< K,h\right>^{\perp} \simeq
\left< g^2,P\right>^{\perp} \subset H^4(X,\bZ).$$
\end{rema}

\

\noindent
{$\mathbf{n=8}$}: 
Here the $L_{ij}$ are linear forms on $\bP^5$ so the determinant cubic
is singular along an elliptic normal curve.  The center $Y\subset \bP^5$ of the mapping to $\bP^8$
is birational to a conic bundle over $\bP^2$. 
The locus $Z\subset Y$, where $Y$ meets the elliptic normal curve, 
consists of nine points.
The equations of $\widehat{Y} \subset \bP^5\times \bP^2$ may be written:
$$L_{i1}x_1+L_{i2}x_2+L_{i3}x_3=0, \quad
Q_1x_1 + Q_2x_2 + Q_3x_3=0.$$
The linear forms induce
$$0 \ra K \ra \cO_{\bP^2}^{\oplus 6} \ra \cO_{\bP^2}(1)^{\oplus 3} \ra 0 $$
and the quadratic form induces a symmetric map
$$K \ra K^*(1)$$
with degeneracy $C\subset \bP^2$ of degree $9$.  

We count parameters: the linear terms depend on $45$ parameters, the quadratic
on an additional $44$ parameters, and coordinate changes account for $43$ parameters,
leaving a total of $46$ parameters. We know that $C$ comes with a distinguished
double cover $\widetilde{C}\ra C$ but even after fixing this we obtain a total of $1024$ determinantal representations (see Proposition~\ref{prop:finite}).

\

\noindent
{$\mathbf{n=9}$}: 
Here the $L_{ij}$ are linear forms on 
$\bP^6$. This allows us to use the normal form
$$\left( \begin{matrix} A_1 & A_2 & A_3 \\
			A_5 & A_1 & A_4 \\
			A_6 & A_7 & A_1  \end{matrix} \right)
\left( \begin{matrix} x_1 \\ x_2 \\ x_3 \end{matrix} \right)=0.$$
The rank-one locus is a degree-six del Pezzo surface.
The locus $Z\subset Y$ is a curve of genus three and degree nine.
The equations of $\widehat{Y} \subset \bP^6\times \bP^2$ may be written:
\begin{align*}
A_1x_1+A_2x_2+A_3x_3=A_5x_1+A_1x_2+A_4x_3=A_6x_1+A_7x_2+A_1x_3 =&0 \\
Q_1x_1 + Q_2x_2 + Q_3x_3=&0.
\end{align*}
The linear forms induce
$$0 \ra K \ra \cO_{\bP^2}^{\oplus 7} \ra \cO_{\bP^2}(1)^{\oplus 3} \ra 0 $$
and the quadratic form induces a symmetric map
$$K \ra K^*(1)$$
with degeneracy $C\subset \bP^2$ of degree $10$.  

Fixing $C$ and the double cover $\widetilde{C}\ra C$, the various
determinant representations as above are parametrized by the
planes in $X$, i.e., by the threefold $F_2(X)$.

\section{Schubert calculus of the variety of planes}
\label{sect:schubert}

Now assume that the variety of planes
$$F_2(X) \subset \Gr(3,n+1)$$
is smooth of the expected dimension $3n-24$. 

Consider the canonical exact sequence over $\Gr(3,n+1)$
$$0 \ra S \ra V\otimes \cO \ra Q \ra 0.$$
The defining equations
$$
X = \{F_1=F_2=F_3=0 \}, \quad F_1,F_2,F_3 \in \Sym^2(V^*),
$$
induce sections $f_1,f_2,f_3 \in \Sym^2(S^*)$ so that
$$F_2(X) = \{ f_1=f_2=f_3=0\}.$$

We compute Chern classes. The Chern classes of $S$ are the Schubert
cycles
$$c(S) = 1 - \sigma_{1} + \sigma_{1,1} - \sigma_{1,1,1},
$$
yielding
$$c(S^*) = 1 + \sigma_{1} + \sigma_{1,1} + \sigma_{1,1,1}.$$
A computation in symmetric functions yields
\begin{eqnarray*}
c_1(\Sym^2(S^*)) &=& 4\sigma_{1} \\
c_2(\Sym^2(S^*)) &=& 5\sigma^2_{1}+5\sigma_{1,1}=5\sigma_{2,0}+10\sigma_{1,1} \\
c_3(\Sym^2(S^*)) &=& 2\sigma_{1}^3 + 11 \sigma_{1}\sigma_{1,1}+7\sigma_{1,1,1} \\
		 &=& 2\sigma_3 + 15\sigma_{2,1} + 20\sigma_{1,1,1} \\
c_4(\Sym^2(S^*)) &=& 6\sigma^2_{1}\sigma_{1,1} + 14 \sigma_{1}\sigma_{1,1,1}
+ 4 \sigma_{11}^2 \\
		 &=& 6\sigma_{3,1} + 10\sigma_{2,2} + 30\sigma_{2,1,1} \\
c_5(\Sym^2(S^*)) &=& 8\sigma^2_{1}\sigma_{1,1,1}+ 4\sigma_{1}\sigma_{1,1}^2
+4\sigma_{1,1}\sigma_{1,1,1} \\
		 &=& 4\sigma_{3,2} + 12 \sigma_{3,1,1} + 20\sigma_{2,2,1} \\
c_6(\Sym^2(S^*)) &=& 8 \sigma_{1}\sigma_{1,1}\sigma_{1,1,1}-8\sigma_{1,1,1}^2
			= 8 \sigma_{3,2,1}.
\end{eqnarray*}
Computing via the `SchurRings' package of \texttt{Macaulay2}, we obtain
{\small
$$[F_2(X)]=512(\sigma_{9,6,3}+2 \sigma_{9,5,4} + 2\sigma_{8,7,3}+6\sigma_{8,6,4}
+4\sigma_{8,5,5} + 4\sigma_{7,7,4}+8\sigma_{7,6,5}+2\sigma_{6,6,6}).$$
}
The paper \cite{DebMan} explores these formulas more systematically;
our formula extends the tabulation on \cite[p.~563]{DebMan}.
See also \cite{Jiang} for more information on Noether-Lefschetz
questions for Fano schemes.

\begin{prop}
\label{prop:finite}
When $n=8$ and $X$ generic, the variety  $F_2(X) \subset \Gr(3,9)$ has
dimension zero and 
$$[F_2(X)]=1024[\operatorname{point}].$$
\end{prop}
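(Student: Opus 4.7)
The expected dimension of $F_2(X)$ is $3n-24 = 0$, so for generic $X$ the class $[F_2(X)]$ must be a nonnegative integer multiple of the point class; the task is to determine that integer. My plan is to realize $F_2(X)$ as the zero locus of a section of a vector bundle on $\Gr(3,9)$ and then apply the Chern class computations already carried out in this section.

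Concretely, each defining quadric $F_i$ of $X$ induces a section $f_i \in \Gamma(\Gr(3,9), \Sym^2(S^*))$ whose zero locus parametrizes planes on which $F_i$ restricts to zero; taking all three together, $F_2(X)$ is the zero scheme of a section of $\cE := (\Sym^2(S^*))^{\oplus 3}$. This bundle has rank $3\cdot 6 = 18 = \dim \Gr(3,9)$. For generic $X$ the section is transverse (by a standard Bertini-type argument, which also underlies the smoothness claims of \cite{DebMan}), so
$$[F_2(X)] = c_{18}(\cE) \cap [\Gr(3,9)] = c_6(\Sym^2(S^*))^3 \cap [\Gr(3,9)]$$
by the Whitney product formula.

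Plugging in $c_6(\Sym^2(S^*)) = 8\sigma_{3,2,1}$ from the table above gives $[F_2(X)] = 512\,\sigma_{3,2,1}^3 \cap [\Gr(3,9)]$. The remaining step is to evaluate the intersection number $\int_{\Gr(3,9)} \sigma_{3,2,1}^3$. Since every Schubert class on $\Gr(3,9)$ has parts bounded by $9-3 = 6$, the unique Schubert class of codimension $18$ is $\sigma_{6,6,6} = [\operatorname{point}]$, and one needs only the coefficient of $\sigma_{6,6,6}$ in $\sigma_{3,2,1}^3$. One can compute this coefficient directly by iterated Pieri or Littlewood--Richardson expansions; alternatively, the ``universal'' expansion of $[F_2(X)]$ displayed just above already contains the answer, since after specializing to $\Gr(3,9)$ every summand with a part exceeding $6$ vanishes, leaving only $512\cdot 2\,\sigma_{6,6,6} = 1024[\operatorname{point}]$.

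The main obstacle is the Schubert-calculus multiplication $\sigma_{3,2,1}^3$: done by hand this is a routine but delicate iterated Littlewood--Richardson computation, and is most cleanly handled by the \texttt{SchurRings} package of \texttt{Macaulay2} already used above. Transversality is a minor secondary point that can be verified by the standard argument that a generic triple of quadrics cuts out a smooth (here, reduced zero-dimensional) Fano scheme of planes.
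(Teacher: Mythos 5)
Your proposal is correct and follows essentially the same route as the paper: the class $[F_2(X)]$ is the Euler class $c_{18}(\Sym^2(S^*)^{\oplus 3})=c_6(\Sym^2(S^*))^3=512\,\sigma_{3,2,1}^3$, and the paper's displayed \texttt{SchurRings} expansion of this product, specialized to $\Gr(3,9)$ where all parts must be at most $6$, leaves exactly $512\cdot 2\,\sigma_{6,6,6}=1024[\operatorname{point}]$. The transversality/genericity point you flag is likewise handled in the paper by appeal to \cite{DebMan}, so there is nothing to add.
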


\begin{ques}
Describe the Galois action arising in this case.
To what extent is it governed by the Galois representation
on the intermediate Jacobian?
\end{ques}
Frank Sottile and his collaborators (see, e.g., \cite{HRS}) are developing computational 
approaches to Galois groups of enumerative questions. Computations
with Taylor Brysiewicz indicate the group in this case might be smaller
than the symmetric group $\mathfrak{S}_{1024}$ but a full computation
appears difficult with existing techniques and computational resources.
\begin{rema}
We propose to construct these examples synthetically: Fix seven generic planes
$$P_1,\ldots,P_7 \subset \bP^8 $$
which depend up to projectivity on $18\cdot 7 - 80 = 46$ parameters. 
Write 
$$\Pi=P_1 \sqcup P_2 \sqcup \cdots \sqcup P_7$$
and note that
$$h^0(\cI_{\Pi}(2))=h^0(\cO_{\bP^8}(2)) - 7\cdot 6 = 3.$$
Thus there is a {\em unique} complete intersection of three quadrics
$X\supset \Pi$ which we expect is smooth. How do we construct the 
other $1017$ planes of $F_2(X)$?
\end{rema}

We focus next on the case $n=9$ where $F_2(X) \subset \Gr(3,10)$
is a threefold with class
$$[F_2(X)]=512 (4\sigma_{7,7,4}+8\sigma_{7,6,5}+2\sigma_{6,6,6}).$$
We have
$$\sigma_1^3 = \sigma_3 + 2\sigma_{2,1}+\sigma_{1,1,1}$$
hence we have
$$\deg(F_2(X))=11264 = 11\cdot 2^{10}.$$
The adjunction formula implies
$$K_{F_2(X)}=2\sigma_1$$
whence
$$K_{F_2(X)}^3=11 \cdot 2^{13}.$$

\begin{prop} \cite[Th.~3.4]{DebMan} \label{prop:DebMan}
When $F_2(X)$ is smooth of the expected dimension we have
$$h^{0,1}(F_2(X))=h^{1,0}(F_2(X))=0.$$
\end{prop}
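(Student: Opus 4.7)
The plan is to compute $H^1(F_2(X), \cO_{F_2(X)})$ via the Koszul resolution of $\cO_{F_2(X)}$ inside $\Gr := \Gr(3, n+1)$; the Hodge symmetry $h^{0,1} = h^{1,0}$ then delivers both vanishings from this single computation. Under our hypotheses, $F_2(X)$ is the smooth zero locus of a regular section of $\cE := (\Sym^2 S^*)^{\oplus 3}$, a rank-$18$ bundle matching the codimension, so the Koszul complex
\begin{equation*}
0 \to \Lambda^{18}\cE^* \to \Lambda^{17}\cE^* \to \cdots \to \Lambda^{1}\cE^* \to \cO_{\Gr} \to \cO_{F_2(X)} \to 0
\end{equation*}
is exact. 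The resulting hypercohomology spectral sequence
\begin{equation*}
E_1^{-p,q} = H^q(\Gr, \Lambda^p \cE^*) \Longrightarrow H^{q-p}(F_2(X), \cO_{F_2(X)})
\end{equation*}
has $\cE^* = (\Sym^2 S)^{\oplus 3}$, and the problem reduces to showing the vanishing of the terms $H^{p+1}(\Gr, \Lambda^p \cE^*)$ on the $q-p=1$ diagonal for $p \ge 0$; the $p=0$ case is automatic since $\Gr$ has no higher coherent cohomology of $\cO$.

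The computational heart decomposes each $\Lambda^p((\Sym^2 S)^{\oplus 3})$ into irreducible Schur functors $\mathbb{S}_\lambda(S)$ with $\ell(\lambda) \le 3$. This combines the Cauchy identity
\begin{equation*}
\Lambda^p(A^{\oplus 3}) = \bigoplus_{a+b+c=p} \Lambda^a A \otimes \Lambda^b A \otimes \Lambda^c A
\end{equation*}
with the classical plethysm expansion of $\Lambda^k \Sym^2(S)$ into Schur functors. For each summand, Borel--Weil--Bott computes $H^q(\Gr, \mathbb{S}_\lambda(S))$ by adding $\rho$ to the weight $(-\lambda_1, -\lambda_2, -\lambda_3, 0, \ldots, 0)$ and running Bott's algorithm: one either lands on a singular weight (cohomology zero) or reaches a dominant weight after exactly $q$ transpositions (a single copy of the corresponding $GL(n+1)$-representation in degree $q$). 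The required vanishing then becomes a concrete, finite combinatorial condition on the partitions appearing.

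The principal obstacle is the combinatorial bookkeeping: for each small $p$, the list of Schur summands in $\Lambda^p\cE^*$ is substantial, and Bott's algorithm must be executed for each partition individually. In practice this is delegated to computer algebra, as in the proof of \cite[Th.~3.4]{DebMan}. A subtlety worth flagging is that vanishing at $E_1$ is sufficient but not necessary; if some partition does contribute a nonzero group on the $q-p=1$ diagonal, one must analyze the Koszul differential -- contraction by the defining section $(f_1, f_2, f_3)$ -- and verify that the contribution is killed on a later page. Pairing this with the Hirzebruch--Riemann--Roch computation of $\chi(F_2(X), \cO_{F_2(X)})$ from the Koszul complex provides a useful cross-check on the final answer.
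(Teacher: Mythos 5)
Your proposal is correct and follows essentially the same route the paper takes: the paper cites \cite[Th.~3.4]{DebMan} for this statement, but its own Section~\ref{sect:koszul} carries out precisely your Koszul-resolution/Borel--Weil--Bott spectral sequence argument (via \cite[Th.~1.2]{Jiang}) and confirms in Proposition~\ref{prop:getsix} that no terms appear in total degree one, which is exactly your $q-p=1$ vanishing. The only quibble is terminological: the decomposition $\bigwedge^p(A^{\oplus 3})=\bigoplus_{a+b+c=p}\bigwedge^a A\otimes\bigwedge^b A\otimes\bigwedge^c A$ is the standard exterior-power-of-a-direct-sum formula rather than the Cauchy identity, and your caveat about possible differentials off the $E_1$ page is well placed but turns out to be unnecessary here since the relevant diagonal is empty at $E_1$.
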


Consider the rank $18$ vector bundle
$$\cE = \Sym^2(S^*)^{\oplus 3}$$
so there exists a section $s\in \Gamma(\Gr(3,10),\cE)$
with $F_2(X)=\{s=0\}$. 
We have
$$\begin{array}{rcl}
c_1(\cE) &=& 3c_1(\Sym^2(S^*))=12\sigma_1 \\
c_2(\cE) &=& 3c_1(\Sym^2(S^*))^2+ 3c_2(\Sym^2(S^*))= 63 \sigma_2+78\sigma_{1,1}\\
c_3(\cE) &=& 190\sigma_3 + 533 \sigma_{2,1} + 364 \sigma_{1,1,1}. 
\end{array}
$$

The exact sequence
$$ 0 \ra T_{F_2(X)} \ra T_{\Gr(3,10)}|F_2(X) \ra N_{F_2(X)/\Gr(3,10)} \ra 0$$
and the interpretation of the normal bundle as $\cE|F_2(X)$
allows us to compute all the Chern classes of $T_{F_2(X)}$.

Indeed, $T_{\Gr(3,10)}=\operatorname{Hom}(S,Q)$ where 
$$c_1(Q)=\sigma_1, \quad c_2(Q)=\sigma_2, \quad c_3(Q)=\sigma_3.$$
$$
\begin{array}{rcl}
c_1(T_{\Gr(3,10)}) &=& r(S) c_1(Q) - r(Q) c_1(S) = 10\sigma_1 \\
c_2(T_{\Gr(3,10)}) &=& \binom{r(S)}{2}c_1(Q)^2 + r(S) c_2(Q) +  \binom{r(Q)}{2}c_1(S)^2 \\
& & 
		      \quad\quad\,   + r(Q) c_2(S) 
			- (r(S)r(Q)-1)c_1(Q)c_1(S) \\
		  &=& 3 \sigma_1^2 + 3 \sigma_2 + 21 \sigma_1^2 +
			7 \sigma_{1,1} -20 \sigma_1^2 \\
		  &=& 47 \sigma_2 + 51 \sigma_{1,1} \\
c_3(T_{\Gr(3,10)}) &=& 140 \sigma_3 + 310\sigma_{2,1} + 180\sigma_{1,1,1}.
\end{array}
$$
The last is computed using the identity
$$
\operatorname{ch}(T_{\Gr(3,10)})=\operatorname{ch}(S^*)\operatorname{ch}(Q),
$$
which yields
$$\operatorname{ch}_2(T_{\Gr(3,10)}) = 3\sigma_{2}-\sigma_{11}$$
and
$$\operatorname{ch}_3(T_{\Gr(3,10)}) = \frac{5}{3}(\sigma_{3}-\sigma_{2,1}+
					\sigma_{1,1,1}).$$
Note that
$$\operatorname{ch}_3(V)= \frac{1}{6}(c_1(V)^3-3c_1(V)c_2(V)+3c_3(V)).$$

We compute the Chern classes, identifying bundles with their
restrictions to $F_2(X)$:
As we have seen
$$c_1(T_{F_2(X)})= c_1(T_{\Gr(3,10)}) - c_1(\cE) = -2\sigma_1.$$
The second Chern class is:
$$
\begin{array}{rcl}
c_2(T_{F_2(X)}) &=& c_2(T_{\Gr(3,10)})-c_2(\cE)
			-c_1(T_{F_2(X)})c_1(\cE) \\
		&=& 8\sigma_2 - 3\sigma_{1,1}
\end{array}
$$
And finally
$$
\begin{array}{rcl}
c_3(T_{F_2(X)}) &=& c_3(T_{\Gr(3,10)})-c_2(T_{F_2(X)})c_1(\cE)
			-c_1(T_{F_2(X)})c_2(\cE) - c_3(\cE) \\
		&=& -20\sigma_3  - \sigma_{2,1}  + 8\sigma_{1,1,1}
\end{array}
$$

A computation with
the `SchurRings' package of \texttt{Macaulay2} gives
$$\chi(\cO_{F_2(X)})=c_1(T_{F_2(X)})c_2(T_{F_2(X)})/24 = -2816=-2^7\cdot 11$$
and
$$\chi(F_2(X))=c_3(T_{F_2(X)}) = -36,864 = -2^{12}\cdot 3^2 .$$

The Hirzebruch-Riemann-Roch formula gives
$$\chi(\cO_{F_2(X)}(1)) = 0,$$
which is to be expected, as Serre duality gives
$$h^i(\cO_{F_2(X)}(1))=h^{3-i}(\omega_{F_2(X)}(-1))=h^{3-i}(\cO_{F_2(X)}(1)).$$
Similarly, we obtain $\chi(\cO_{F_2(X)}(2))=2816$. 
We also get
$$\chi(\cO_{F_2(X)}(3))=h^0(\cO_{F_2(X)}(3)) = 16,896 = 2^9 \cdot 3 \cdot 11,  $$
where the first equality is Kodaira vanishing.
Thus we have enough data to extract the Hilbert polynomial of $F_2(X)$
$$\chi(\cO_{F_2(X)}(m))=\frac{2^7\cdot 11}{3}(m-1) ( 5 (m-1)^2 -2).$$
Hirzebruch-Riemann-Roch allows us to compute $\chi(\Omega^1_{F_2(X)})$
as well
$$\chi(\Omega^1_{F_2(X)}) =15,616= 2^{8} \cdot 61.$$

\section{Koszul computations}
\label{sect:koszul}

\subsection{General set-up}
Let $\cE$ denote a vector bundle and $s\in \Gamma(\cE)$ a section
such that the degeneracy locus $Z=\{s=0\}$ has codimension equal
to the rank $R$ of $\cE$. Then we have a resolution
$$0 \ra \bigwedge^{R} \cE^* \ra \bigwedge^{R-1} \cE^* \ra
\cdots \ra \cE^* \ra \cO  \ra \cO_Z \ra 0$$
and the Koszul complex
$$0 \ra \bigwedge^{R} \cE^* \ra \bigwedge^{R-1} \cE^* \ra
\cdots \ra \cE^* \ra \cO  \ra 0.$$
The arrow
$$\iota_s: \bigwedge^n \cE^* \ra \bigwedge^{n-1} \cE^*$$
is contraction by $s$. 
The hypercohomology spectral sequence gives
$$E_1^{p,q}= H^{q}(\bigwedge^{-p} \cE^*) \Rightarrow H^{p+q}(\cO_Z),$$
where $p=-R,\ldots,0$.

Following \cite{Manivel}, we may use this to compute the cohomology of 
degeneracy loci over Grassmannians $\bG(n,d-1)=\Gr(n+1,V)$. 
Given a $d$-dimensional vector space $V$ and
integers $\lambda = (\lambda_1,\ldots,\lambda_k)$
with $\lambda_1 \ge \lambda_2  \ge \cdots  \ge \lambda_k$
and $\lambda_k=0$ for $k>d$, let 
$\Gamma^{\lambda} V$ denote the associated Schur functor.
We observe the convention $\Gamma^{\lambda}V=0$ for sequences
of integers $\lambda$ failing the decreasing or vanishing conditions.

We keep track of the decompositions using the dictionary between 
Schubert calculus and tensor products
$$\Gamma^{\lambda} V \otimes \Gamma^{\mu} V = 
\sum_{\nu} c_{\lambda, \mu}^{\nu} \Gamma^{\nu} V,
$$ 
where the multiplicies are defined by
$$\sigma_{\lambda}\sigma_{\mu}= \sum_{\nu} c_{\lambda,\mu}^{\nu} \sigma_{\nu}.$$

\subsection{Planes in intersections of three quadrics in $\bP^9$}
Consider the bundle
$$\cE = \Sym^2(S^*)^{\oplus 3}$$
and $s\in \Gamma(\cE)$ with $F_2(X)=\{s=0\}$. 
We have the associated Koszul resolution
$$0 \ra \bigwedge^{18} \cE^* \ra \cdots \ra \bigwedge^r \cE^* \ra
\cdots \ra \cE^* \ra \cO  \ra \cO_{F_2(X)} \ra 0.$$
The terms of the Koszul complex decompose into direct sums of products
$$\bigwedge^{e_1} \Sym^2(S) \otimes \bigwedge^{e_2} \Sym^2(S) \otimes
\bigwedge^{e_3} \Sym^2(S), \quad e_1+e_2+e_3=r.$$
The hypercohomology spectral sequence gives
$$E_1^{p,q}= H^{q}(\bigwedge^{-p} \cE^*) \Rightarrow H^{p+q}(\cO_{F_2(X)}),$$
where $p=-18,\ldots,0$.

Recall the Weyl character formula for $\operatorname{GL}(V)$
representations:
$$\dim \Gamma^{\lambda}(V) = \prod_{i<j} \frac{\lambda_i-\lambda_j+j-i}{j-i}.$$
Assume $S$ has rank three then
$$\operatorname{rank} \Gamma^{(a_1,a_2,a_3)}S = (a_1-a_2+1)(a_1-a_3+2)(a_2-a_3+1)/2.$$

We observe the dictionary
$$\begin{array}{rcl}
\Sym^2(S) & \sim & \sigma_2 \\
\bigwedge^2 \Sym^2(S) & \sim & \sigma_{3,1} \\
\bigwedge^3 \Sym^2(S) & \sim & \sigma_{3,3} + \sigma_{4,1,1} \\
\bigwedge^4 \Sym^2(S) & \sim & \sigma_{4,3,1} \\
\bigwedge^5 \Sym^2(S) & \sim & \sigma_{4,4,2} \\
\bigwedge^6 \Sym^2(S) & \sim & \sigma_{4,4,4}
\end{array}
$$
whence
\begin{align*}
\Sym^2(S) &\simeq \Gamma^{(2)}(S) \\
\bigwedge^2 \Sym^2(S) & \simeq \Gamma^{(3,1)}(S) \\
\bigwedge^3 \Sym^2(S) & \simeq \Gamma^{(3,3)}(S) \oplus \Gamma^{(4,1,1)}(S) \\
\bigwedge^4 \Sym^2(S) & \simeq \Gamma^{(4,3,1)}(S) \\
\bigwedge^5 \Sym^2(S) & \simeq \Gamma^{(4,4,2)}(S) \\
\bigwedge^6 \Sym^2(S) & \simeq \Gamma^{(4,4,4)}(S)
\end{align*}

Note also \cite[Cor.~1.3]{Jiang} that 
$$H^j(\Gr(3,10),\Gamma^{\lambda} S) = 0 $$
whenever $j$ is not divisible by $7$ and for all but
one value of $j$.  

We tabulate the representations $\Gamma^a S, a=(a_1,a_2,a_3)$
appearing in each $\bigwedge^r \cE^*$:
{\small
$$
\begin{array}{r|l}
r & \text{decomposition} \\
\hline
0 & {\color{red} \Gamma^{(0,0,0)}} \\
1 & 3\Gamma^{(2)} \\
2 & 3\Gamma^{(4)} + 6\Gamma^{(3,1)} + 3\Gamma^{(2,2)} \\
3 & \Gamma^{(6)}+ 8\Gamma^{(5,1)} + 9\Gamma^{(4,2)} + 10\Gamma^{(4,1,1)} 
  +10\Gamma^{(3,3)}+8\Gamma^{(3,2,1)}+\Gamma^{(2,2,2)}\\
4 & 3\Gamma^{(7,1)}+9\Gamma^{(6,2)}+15\Gamma^{(6,1,1)}+18\Gamma^{(5,3)}
  +24\Gamma^{(5,2,1)}+6\Gamma^{(4,4)} \\
  & +33\Gamma^{(4,3,1)}+9\Gamma^{(4,2,2)}
  +15\Gamma^{(3,3,2)} \\
5 & 3\Gamma^{(8,2)}+{\color{red} 6\Gamma^{(8,1,1)}}+9\Gamma^{(7,3)}+24\Gamma^{(7,2,1)}+18\Gamma^{(6,4)}+54\Gamma^{(6,3,1)} \\
  & +21\Gamma^{(6,2,2)}+6\Gamma^{(5,5)}+48\Gamma^{(5,4,1)}+54\Gamma^{(5,3,2)}+39\Gamma^{(4,4,2)}+30\Gamma^{(4,3,3)}\\

6& \Gamma^{(9,3)}+8\Gamma^{(9,2,1)}+8\Gamma^{(8,4)}+27\Gamma^{(8,3,1)}+19\Gamma^{(8,2,2)}+9\Gamma^{(7,5)}\\
 & + 64\Gamma^{(7,4,1)}+62\Gamma^{(7,3,2)}+10\Gamma^{(6,6)}+53\Gamma^{(6,5,1)}+117\Gamma^{(6,4,2)}+
	56\Gamma^{(6,3,3)} \\
  &+ 46\Gamma^{(5,5,2)}+88\Gamma^{(5,4,3)}+38\Gamma^{(4,4,4)}\\

7& 3\Gamma^{(10,3,1)}+6\Gamma^{(10,2,2)}+3\Gamma^{(9,5)}+24\Gamma^{(9,4,1)}+27\Gamma^{(9,3,2)}+6\Gamma^{(8,6)} \\
 & + 42\Gamma^{(8,5,1)} + 93\Gamma^{(8,4,2)} + 36\Gamma^{(8,3,3)} + 3\Gamma^{(7,7)}+48\Gamma^{(7,6,1)}+ 132\Gamma^{(7,5,2)}\\
 & +144\Gamma^{(7,4,3)}+66\Gamma^{(6,6,2)}+138\Gamma^{(6,5,3)}+114\Gamma^{(6,4,4)}+60\Gamma^{(5,5,4)} \\

8 & 3\Gamma^{(11,3,2)}+9\Gamma^{(10,5,1)}+24\Gamma^{(10,4,2)}+9\Gamma^{(10,3,3)}+3\Gamma^{(9,7)}+24\Gamma^{(9,6,1)} \\
  & +75\Gamma^{(9,5,2)}+72\Gamma^{(9,4,3)}+24\Gamma^{(8,7,1)}+102\Gamma^{(8,6,2)}+168\Gamma^{(8,5,3)}+99\Gamma^{(8,4,4)}\\
  & +69\Gamma^{(7,7,2)}+168\Gamma^{(7,6,3)}+213\Gamma^{(7,5,4)}+96\Gamma^{(6,6,4)}+75\Gamma^{(6,5,5)} \\
9 & \Gamma^{(12,3,3)}+9\Gamma^{(11,5,2)}+8\Gamma^{(11,4,3)}+9\Gamma^{(10,7,1)}+36\Gamma^{(10,6,2)} + 63\Gamma^{(10,5,3)} \\
  & +28\Gamma^{(10,4,4)}+{\color{red} \Gamma^{(9,9)}}+8\Gamma^{(9,8,1)}+63\Gamma^{(9,7,2)}+128\Gamma^{(9,6,3)}+142 \Gamma^{(9,5,4)} \\
  & +28\Gamma^{(8,8,2)}+142\Gamma^{(8,7,3)}+216\Gamma^{(8,6,4)}+146\Gamma^{(8,5,5)} + 146\Gamma^{(7,7,4)}\\
  & +160\Gamma^{(7,6,5)}+20\Gamma^{(6,6,6)}
\end{array}
$$
}
The terms in {\color{red} red} contribute to the cohomology \cite[Th.~1.2]{Jiang}. 
The representations for $r>9$ 
may be read off via duality
$$\bigwedge^{r} \cE^* = \det(\cE^*) \otimes \bigwedge^{18-r} \cE, \quad
\Gamma^{(a_1,a_2,a_3)} \times \Gamma^{(12-a_3,12-a_2,12-a_1)} \ra \Gamma^{(12,12,12)}.$$

{\small
$$
\begin{array}{r|l}
r & \text{decomposition} \\
\hline
18 & {\color{red} \Gamma^{(12,12,12)}} \\
17 & {\color{red} 3\Gamma^{(12,12,10)}} \\
16 & 3\Gamma^{(12,12,8)} + 6\Gamma^{(12,11,9)} + {\color{red} 3\Gamma^{(12,10,10)}} \\
15 & \Gamma^{(12,12,6)}+ 8\Gamma^{(12,11,7)} + 9\Gamma^{(12,10,8)} + 10\Gamma^{(11,11,8)} 
  +10\Gamma^{(12,9,9)}+8\Gamma^{(11,10,9)}\\
   & +{\color{red} \Gamma^{(10,10,10)}}\\

14 & 3\Gamma^{(12,11,5)}+9\Gamma^{(12,10,6)}+15\Gamma^{(11,11,6)}+18\Gamma^{(12,9,7)}
  +24\Gamma^{(11,10,7)}+6\Gamma^{(12,8,8)} \\
  & +33\Gamma^{(11,9,8)}+9\Gamma^{(10,10,8)}
  +15\Gamma^{(10,9,9)} \\

13 & 3\Gamma^{(12,10,4)}+6\Gamma^{(11,11,4)}+9\Gamma^{(12,9,5)}+24\Gamma^{(11,10,5)}+18\Gamma^{(12,8,6)}+54\Gamma^{(11,9,6)} \\
  & +21\Gamma^{(10,10,6)}+6\Gamma^{(12,7,7)}+48\Gamma^{(11,8,7)}+54\Gamma^{(10,9,7)}+39\Gamma^{(10,8,8)}+30\Gamma^{(9,9,8)}\\

12& \Gamma^{(12,9,3)}+8\Gamma^{(11,10,3)}+8\Gamma^{(12,8,4)}+27\Gamma^{(11,9,4)}+19\Gamma^{(10,10,4)}+9\Gamma^{(12,7,5)}\\
 & + 64\Gamma^{(11,8,5)}+62\Gamma^{(10,9,5)}+10\Gamma^{(12,6,6)}+53\Gamma^{(11,7,6)}+117\Gamma^{(10,8,6)}\\
  &+56\Gamma^{(9,9,6)} 
  + 46\Gamma^{(10,7,7)}+88\Gamma^{(9,8,7)}+38\Gamma^{(8,8,8)}\\

11&{\color{red} 3\Gamma^{(11,9,2)}}+{\color{red} 6\Gamma^{(10,10,2)}}+3\Gamma^{(12,7,3)}+24\Gamma^{(11,8,3)}+27\Gamma^{(10,9,3)}+6\Gamma^{(12,6,4)} \\
 & + 42\Gamma^{(11,7,4)} + 93\Gamma^{(10,8,4)} + 36\Gamma^{(9,9,4)} + 3\Gamma^{(12,5,5)}+48\Gamma^{(11,6,5)}+ 132\Gamma^{(10,7,5)}\\
 & +144\Gamma^{(9,8,5)}+66\Gamma^{(10,6,6)}+138\Gamma^{(9,7,6)}+114\Gamma^{(8,8,6)}+60\Gamma^{(8,7,7)} \\

10 & {\color{red} 3\Gamma^{(10,9,1)}}+9\Gamma^{(11,7,2)}+24\Gamma^{(10,8,2)}+{\color{red} 9\Gamma^{(9,9,2)}}+3\Gamma^{(12,5,3)}+24\Gamma^{(11,6,3)} \\
  & +75\Gamma^{(10,7,3)}+72\Gamma^{(9,8,3)}+24\Gamma^{(11,5,4)}+102\Gamma^{(10,6,4)}+168\Gamma^{(9,7,4)}\\
   &+99\Gamma^{(8,8,4)}
 +69\Gamma^{(10,5,5)}+168\Gamma^{(9,6,5)}+213\Gamma^{(8,7,5)}+96\Gamma^{(8,6,6)}+75\Gamma^{(7,7,6)} 
\end{array}
$$
}

And then we record those contributing cohomology in degree $j$,
using  \cite[Th.~1.2]{Jiang}. Note that the sequence
$$(-1,-2,-3,-4,-5,-6,-7,a_1-8,a_2-9,a_3-10)$$
must have no repeating integers as these yield `singular'
weights. Thus we must have $a_1\ge 8$; if $a_1=8$ then $a_2=1,0$, etc.
$$\begin{array}{r|l}
j & \text{weights $a$ contributing } \\
\hline \hline
0 & (0,0,0) \\
\hline
7  & \text{no contributions in } \bigwedge^4 \cE^* \\
   & (8,1,1) \\
   & \text{no contributions in } \bigwedge^6 \cE^* \\
\hline
14 & (9,9,0) \\
   & (10,9,1), (9,9,2) \\
   & (10,10,2), (11,9,2) \\
   &  \text{no contributions in } \bigwedge^{12} \cE^* \\
\hline
21 & (10,10,10)  \\
   & (12,10,10) \\
  & (12,12,10)  \\
 & (12,12,12)
\end{array}$$

The degree $7$ cohomology contributes through
$$H^7(\bigwedge^5 \cE^*)=H^7((\Gamma^{(8,1,1)}S)^{\oplus 6})
\simeq (\Gamma^{(1,1,1,1,1,1,1,1,1,1)}V)^{\oplus 6}
\simeq (\det(V))^{\oplus 6} \simeq \bC^6.$$
Here $V$ is the standard $10$-dimensional representation.

\begin{prop} \label{prop:getsix}
We have
$$H^0(\cO_{F_2(X)})=\bC, \quad H^1(\cO_{F_2(X)})=0.$$
For higher degrees, we have
$$\bC^6 \simeq
E_1^{-5,7}=E_2^{-5,7}=\cdots = E_{\infty}^{-5,7}=H^2(\cO_{F_2(X)}).$$
Consequently, we deduce that 
$$h^3(\cO_{F_2(X)})=2816 + 1 + 6 - 0 = 2823.$$
\end{prop}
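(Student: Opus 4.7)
The plan is to run the hypercohomology spectral sequence
$E_1^{p,q} = H^q(\bigwedge^{-p}\cE^*) \Rightarrow H^{p+q}(\cO_{F_2(X)})$
against the decompositions of $\bigwedge^r\cE^*$ already tabulated, using \cite[Th.~1.2]{Jiang}: each summand $\Gamma^{(a_1,a_2,a_3)}S$ has at most one nonvanishing cohomology group on $\Gr(3,10)$, in a degree divisible by $7$, and it vanishes whenever the shifted sequence
$(-1,-2,-3,-4,-5,-6,-7,\,a_1-8,\,a_2-9,\,a_3-10)$
has a repeated entry. The tables above already catalogue, for each $j \in \{0,7,14,21\}$, the weights that survive this test and the exterior power in which they live.

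For total degree $0$, the only nonzero $E_1$-term is $E_1^{0,0} = \bC$, so $h^0 = 1$. For total degree $1$, the only candidates are $E_1^{-6,7}$, which is zero by the explicit remark in the table, and $E_1^{-13,14}$, which is zero because none of the twelve weights in $\bigwedge^{13}\cE^*$ matches any of the five that contribute at $j = 14$. This gives $h^1 = 0$, recovering Proposition~\ref{prop:DebMan}.

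For total degree $2$, the table shows that the only nonvanishing $E_1$-term is $E_1^{-5,7} \simeq (\det V)^{\oplus 6} \simeq \bC^6$, coming from the six copies of $\Gamma^{(8,1,1)}S$ in $\bigwedge^5\cE^*$. I next verify that this term survives to $E_\infty$. Outgoing differentials $d_r\colon E_r^{-5,7}\to E_r^{-5+r,8-r}$ land, for $r = 1$, in $H^7(\bigwedge^4\cE^*) = 0$ per the table, and for $2\le r\le 5$ in $H^{8-r}(\bigwedge^{5-r}\cE^*)$ with $8-r\in\{6,5,4,3\}$, excluded by the mod-$7$ divisibility. Incoming differentials $d_r\colon E_r^{-5-r,6+r}\to E_r^{-5,7}$ can be nonzero only when $7\mid 6+r$, i.e.\ $r\in\{1,8\}$; the case $r=1$ gives $H^7(\bigwedge^6\cE^*) = 0$, and $r=8$ would require $H^{14}(\bigwedge^{13}\cE^*)$, already shown to vanish. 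Hence $E_\infty^{-5,7} = \bC^6$ and $h^2 = 6$.

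Finally, $h^3$ is forced by the Hirzebruch--Riemann--Roch value $\chi(\cO_{F_2(X)}) = -2816$ from Section~\ref{sect:schubert}: $h^3 = h^0 - h^1 + h^2 - \chi = 1 - 0 + 6 + 2816 = 2823$. The only nontrivial labor is the repeated-entry check on the summands of $\bigwedge^{12}\cE^*$ and $\bigwedge^{13}\cE^*$ in degree $14$, a finite and entirely mechanical verification; the substantive content of the proposition lies in isolating $\Gamma^{(8,1,1)}S$ in $\bigwedge^5\cE^*$ as the unique summand contributing to total degree $2$, together with the vanishing of all surrounding differentials.
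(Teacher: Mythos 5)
Your argument is correct and follows essentially the same route as the paper: both read off the support of the $E_1$-page from the Borel--Weil--Bott tables, observe that $E_1^{-5,7}\simeq\bC^6$ is the unique term in total degree $2$ and that no differential can enter or leave it, and then extract $h^3$ from $\chi(\cO_{F_2(X)})=-2816$. The only difference is presentational: the paper dismisses all differentials at once by comparing total degrees of the nine supported positions, whereas you check the possible $d_r$ in and out of $(-5,7)$ one by one; the conclusion is identical.
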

\begin{proof} The only term in degree zero is $E_1^{(0,0)}$ and there are no terms in degree one, which gives the degree $0$ and $1$ cohomology; see also Prop.~\ref{prop:DebMan} and \cite{DebMan}. The spectral sequence is supported at the following values of $(p,q)$:
\begin{align*}
(0,0), (-5,7), (-9,14),& (-10,14),(-11,14), \\
 (-15,21), (-16,21),&(-17,21), (-18,21).
 \end{align*}
All the values after $(-5,7)$ have degree $p+q \ge 3$ and $p\le -5$, thus 
do not receive arrows from $E_r^{-5,7}$. And clearly there are no maps from 
the degree $(0,0)$ term, abutting to $\Gamma(\cO_{F_2(X)})$. This yields
the equalities asserted above.
\end{proof}

The degree $14$ cohomology contributes through
\begin{eqnarray*}
H^{14}(\bigwedge^{11} \cE^*) &=& H^{14}((\Gamma^{(10,10,2)}S)^{\oplus 6})\oplus
					(\Gamma^{(11,9,2)}S)^{\oplus 3})) \\
			     &=& (\Gamma^{(3,3,2,2,2,2,2,2,2,2)}V)^{\oplus 6}
			\oplus (\Gamma^{(4,2,2,2,2,2,2,2,2)}V)^{\oplus 3} \\
		             &\simeq & (\bC^{45})^{6} \oplus (\bC^{55})^{3} 
					\simeq \bC^{435} \\
H^{14}(\bigwedge^{10} \cE^*) &=& H^{14}((\Gamma^{(9,9,2)}S)^{\oplus 9})\oplus
					(\Gamma^{(10,9,1)}S)^{\oplus 3})) \\
			   &=& (\Gamma^{(2,2,2,2,2,2,2,2,2,2)}V)^{\oplus 9}
			\oplus (\Gamma^{(3,2,2,2,2,2,2,2,2,1)}V)^{\oplus 3} \\
			&\simeq & (\bC)^9 \oplus (\bC^{99})^3 \simeq  \bC^{306} \\
H^{14}(\bigwedge^9 \cE^*) &=&  H^{14}(\Gamma^{(9,9)}S)  \\
 			  &=& \Gamma^{(2,2,2,2,2,2,2,2,2,0)}V
					\simeq \bC^{55} 
\end{eqnarray*}

The degree $21$ cohomology contributes through:
\begin{eqnarray*}
H^{21}(\bigwedge^{18} \cE^*) &=& H^{21}(\Gamma^{12,12,12}S) \\
		&=&\Gamma^{(5,5,5,3,3,3,3,3,3,3)}V\simeq \bC^{4950} \\
H^{21}(\bigwedge^{17} \cE^*) &=& H^{21}((\Gamma^{12,12,10}S)^{\oplus 3}) \\
		&=&(\Gamma^{(5,5,3,3,3,3,3,3,3,3)}V)^{\oplus 3}
			\simeq (\bC^{825})^3 \simeq \bC^{2475} \\
H^{21}(\bigwedge^{16} \cE^*) &=& H^{21}((\Gamma^{12,10,10}S)^{\oplus 3}) \\
		&=&(\Gamma^{(5,3,3,3,3,3,3,3,3,3)}V)^{\oplus 3} \simeq (\bC^{55})^3 \simeq \bC^{165}  \\
H^{21}(\bigwedge^{15} \cE^*) &=& H^{21}(\Gamma^{10,10,10}S) \\
		&=&\Gamma^{(3,3,3,3,3,3,3,3,3)}V \simeq \bC
\end{eqnarray*}

\begin{rema}
Standard properties of spectral sequences imply
$$-2816=\chi(\cO_{F_2(X)})= \sum (-1)^{p+q}
H^{q}(\bigwedge^{-p} \cE^*)= \sum (-1)^{a+p} H^{7a}(\bigwedge^{-p} \cE^*).$$
We compute 
$$-2816 =   1+6+(-55+306-435)+(-4950+2475-165+1) ,$$
thus everything checks.
\end{rema}

\begin{ques} The simplest structure on the spectral sequence would be to have exactness 
in all degrees where there is no contribution to cohomology, i.e., degeneration at $E_2$. Thus the only
nonzero terms in the second page would be
$$E^{-18,21}_2 \simeq \bC^{4950-2475+165-1} = \bC^{2639}, \quad
E^{-11,14}_2 \simeq \bC^{435-306+55}  = \bC^{184},$$
as well as
$$E^{-5,7}_2 \simeq \bC^6, \quad E^{0,0}_2\simeq \bC.$$
Does this degeneration occur? It would imply a nontrivial filtration on the holomorphic three-forms
of $F_2(X)$.  
\end{ques}

\begin{prop}
The Hodge diamond of $F_2(X)$ is
$$\begin{array}{ccccccc}
  &   &   &  1  &  &  &  \\
   &   &   0  &  & 0 & & \\
   &  6 &  & 62 & & 6 & \\
 2823  & &15684 & & 15684 & & 2823
 \end{array}  
$$
\end{prop}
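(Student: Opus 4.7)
The plan is to use Hodge symmetry, Serre duality, and the Hirzebruch--Riemann--Roch computation of $\chi(\Omega^1_{F_2(X)})$ to reduce the Hodge diamond to a single unknown Hodge number, and then to pin that number down by refining the Koszul techniques of Section~\ref{sect:koszul}.

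First, Proposition~\ref{prop:getsix} supplies $h^{0,q}(F_2(X))=1,0,6,2823$ for $q=0,1,2,3$. Hodge symmetry $h^{p,q}=h^{q,p}$ determines the leftmost column, and Serre duality on the smooth projective threefold $F_2(X)$ (using $\omega_{F_2(X)}\simeq \cO_{F_2(X)}(2)$ from the adjunction formula of Section~\ref{sect:schubert}) provides $h^{p,q}=h^{3-p,3-q}$, filling in the rightmost column and bottom row and implying $h^{1,1}=h^{2,2}$ and $h^{1,2}=h^{2,1}$. Substituting $h^{1,0}=0$ and $h^{1,3}=h^{2,0}=6$ into
$$\chi(\Omega^1_{F_2(X)})=h^{1,0}-h^{1,1}+h^{1,2}-h^{1,3}=15616$$
yields $h^{1,2}-h^{1,1}=15622$, so the entire diamond is determined by $h^{1,1}$ alone.

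Next I would compute $h^{1,1}=h^1(\Omega^1_{F_2(X)})$ by extending the Koszul technique of Section~\ref{sect:koszul} to $\Omega^1_{F_2(X)}$ via the conormal exact sequence
$$0\ra \cE^*|_{F_2(X)}\ra \Omega^1_{\Gr(3,10)}|_{F_2(X)}\ra \Omega^1_{F_2(X)}\ra 0.$$
The cohomology of the first two sheaves is accessible through the hypercohomology spectral sequences obtained from the Koszul resolution of $\cO_{F_2(X)}$ tensored with $\cE^*$ and with $\Omega^1_{\Gr(3,10)}=\Hom(Q,S)$, respectively. At the $E_1$ page one encounters sums of Bott cohomology groups on $\Gr(3,10)$ of Schur functors applied to $S$ and $Q^*$, most of which vanish by \cite[Th.~1.2]{Jiang}. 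Tabulating the surviving contributions in the spirit of Section~\ref{sect:koszul} and feeding them into the long exact sequence associated to the conormal sequence should produce $h^1(\Omega^1_{F_2(X)})=62$, and hence $h^{1,2}=15684$.

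The main obstacle is this last step: decomposing $\bigwedge^r\cE^*\otimes\cE^*$ and $\bigwedge^r\cE^*\otimes\Omega^1_{\Gr(3,10)}$ into irreducible Schur functors, isolating the few summands with non-vanishing cohomology, and tracking the resulting spectral-sequence differentials as well as the conormal long exact sequence is noticeably heavier than the single-tensor tabulation already performed. As a sanity check and a possibly cleaner alternative route, observe that the target values $h^{2,0}(F_2(X))=6$ and $h^{1,1}(F_2(X))=62$ coincide exactly with the middle Hodge numbers $h^{4,2}(X)=h^{2,4}(X)=6$ and $h^{3,3}(X)=62$ tabulated for $X\subset\bP^9$ in Section~\ref{subsect:recall}. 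This strongly suggests a Beauville--Donagi type Hodge isomorphism $H^2(F_2(X))\simeq H^6(X)(2)$ induced by the incidence correspondence on the universal family of planes $\{(P,x):x\in P\subset X\}\subset F_2(X)\times X$; establishing this isomorphism directly would give $h^{1,1}=62$ without any spectral-sequence computation and would be the cleaner path to completing the diamond.
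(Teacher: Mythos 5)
Your reduction of the diamond to the single unknown $h^{1,1}$ --- via Proposition~\ref{prop:getsix}, Hodge symmetry, Serre duality, and $\chi(\Omega^1_{F_2(X)})=15616$ --- is correct and matches the bookkeeping the paper relies on implicitly (one can also confirm consistency with $\chi(F_2(X))=-36864$). But neither of your two routes to $h^{1,1}=62$ is actually carried out, and the entire content of the proposition lies in that step. Your primary route, a Koszul/Bott computation of $H^q(\Omega^1_{F_2(X)})$ via the conormal sequence, is only a plan with the hard part explicitly deferred; as written it establishes nothing. Your ``sanity check'' is in fact the paper's proof: the incidence correspondence induces a cylinder map $\alpha_1\colon H^6(X,\bZ)\to H^2(F_2(X),\bZ)(-2)$, and the paper invokes the Main Theorem of Shimada to show that $\alpha_1$ is injective on $H^6(X,\bZ)_{\prim}$. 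Combined with $h^{0,2}(F_2(X))=6=h^{2,4}(X)$ from Proposition~\ref{prop:getsix}, this forces
$$H^2(F_2(X),\bQ)_{\prim}\simeq H^6(X,\bQ)(2)_{\prim}\oplus\bQ(-1)^N,$$
and the residual ambiguity $N$ --- extra Hodge--Tate classes, which would inflate $h^{1,1}$ beyond $62$ --- is eliminated by Jiang's theorem that $\rho(F_2(X))=1$ for very general $X$. These two external inputs, injectivity of the cylinder map and the Picard-rank computation, are exactly what your proposal is missing: the numerical coincidence with the middle Hodge numbers of $X$ is suggestive, but by itself it neither proves that the correspondence induces an injection on primitive cohomology nor rules out additional $(1,1)$-classes on $F_2(X)$.
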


\begin{proof}
We recall that 
\begin{itemize}
\item{The Picard rank $\rho(F_2(X))=1$
when $X$ is very general \cite[Th.~0.3]{Jiang}.}
\item{Let $\Sigma\ra \bP^2$ denote the double cover branched along degeneracy curve;
there exists an isometry on primitive cohomology
$$H^6(X,\bZ)_{\prim} \hookrightarrow H^2(\Sigma,\bZ)(-2)_{\prim},$$ 
realizing the former as an index-two sublattice of the latter \cite[Th.~0.1]{ogrady86}.}
\end{itemize}
The latter observation gives the Hodge numbers of $X$ displayed in
Section~\ref{subsect:recall}.

The incidence correspondence 
$$\begin{array}{rcccl}
	&	& Z & &  \\
	 &\stackrel{f}{\swarrow} & & \stackrel{g}{\searrow} & \\
X  &     &   & & F_2(X)
\end{array}
$$
induces Abel-Jacobi maps
$$
\alpha_1:g_*f^*:H^6(X,\bZ) \ra H^2(F_2(X),\bZ)(-2), 
$$
$$
\alpha_2:f_*g^*:H^4(F_2(X),\bZ) \ra H^6(X,\bZ)(1).
$$
Letting 
$$
L:H^2(F_2(X),\bZ) \ra H^4(F_2(X),\bZ)
$$ denote intersection by the
hyperplane class $\sigma_1$,
the composition
$$\alpha_2 \circ L \circ \alpha_1:H^6(X,\bZ) \ra H^6(X,\bZ)$$
forces the cohomology of $X$ and $F_2(X)$ to be tightly
intertwined. Such constructions are used to establish
Grothendieck's version of the Hodge conjecture for 
varieties with sparse Hodge diamond; see for example
\cite{VoisinConiveau}.

For our immediate purpose, we can apply the Main Theorem of
\cite{Shimada} to conclude that the cylinder map $\alpha_1$
injects the primitive cohomology $H^6(X,\bZ)_{\prim}$ into 
$H^2(F_2(X),\bZ)$. Since we computed 
$$h^{0,2}(F_2(X))=6$$
in Proposition~\ref{prop:getsix}, we conclude that 
$$
H^2(F_2(X),\bQ)_{\prim} \simeq H^6(X,\bQ)(2)_{\prim} \oplus \bQ(-1)^N
$$
for some $N$.
The last summand is of Hodge-Tate type. 
However, $N=0$ by Jiang's result on the Picard group.
\end{proof}

\bibliographystyle{alpha}
\bibliography{3quad}

\begin{thebibliography}{HPT18}

\bibitem[Bea77]{BeauvilleThesis}
Arnaud Beauville.
\newblock Vari\'{e}t\'{e}s de {P}rym et jacobiennes interm\'{e}diaires.
\newblock {\em Ann. Sci. \'{E}cole Norm. Sup. (4)}, 10(3):309--391, 1977.

\bibitem[Bho86]{Bhosle}
Usha~N. Bhosle.
\newblock Nets of quadrics and vector bundles on a double plane.
\newblock {\em Math. Z.}, 192(1):29--43, 1986.

\bibitem[CTS87]{CTSan}
Jean-Louis Colliot-Th\'{e}l\`ene and Jean-Jacques Sansuc.
\newblock La descente sur les vari\'{e}t\'{e}s rationnelles. {II}.
\newblock {\em Duke Math. J.}, 54(2):375--492, 1987.

\bibitem[DM98]{DebMan}
Olivier Debarre and Laurent Manivel.
\newblock Sur la vari\'et\'e des espaces lin\'eaires contenus dans une
  intersection compl\`ete.
\newblock {\em Math. Ann.}, 312(3):549--574, 1998.

\bibitem[Esn03]{Esnault}
H\'{e}l\`ene Esnault.
\newblock Varieties over a finite field with trivial {C}how group of 0-cycles
  have a rational point.
\newblock {\em Invent. Math.}, 151(1):187--191, 2003.

\bibitem[FR05]{FR}
N.~Fakhruddin and C.~S. Rajan.
\newblock Congruences for rational points on varieties over finite fields.
\newblock {\em Math. Ann.}, 333(4):797--809, 2005.

\bibitem[GHS03]{GHS}
Tom Graber, Joe Harris, and Jason Starr.
\newblock Families of rationally connected varieties.
\newblock {\em J. Amer. Math. Soc.}, 16(1):57--67, 2003.

\bibitem[Hir66]{hirzebruch}
F.~Hirzebruch.
\newblock {\em Topological methods in algebraic geometry}.
\newblock Third enlarged edition. New appendix and translation from the second
  German edition by R. L. E. Schwarzenberger, with an additional section by A.
  Borel. Die Grundlehren der Mathematischen Wissenschaften, Band 131.
  Springer-Verlag New York, Inc., New York, 1966.

\bibitem[HPT18]{HPT17}
Brendan Hassett, Alena Pirutka, and Yuri Tschinkel.
\newblock Intersections of three quadrics in {$\Bbb P^7$}.
\newblock In {\em Surveys in differential geometry 2017. {C}elebrating the 50th
  anniversary of the {J}ournal of {D}ifferential {G}eometry}, volume~22 of {\em
  Surv. Differ. Geom.}, pages 259--274. Int. Press, Somerville, MA, 2018.

\bibitem[HRS18]{HRS}
Jonathan~D. Hauenstein, Jose~Israel Rodriguez, and Frank Sottile.
\newblock Numerical computation of {G}alois groups.
\newblock {\em Found. Comput. Math.}, 18(4):867--890, 2018.

\bibitem[HT10]{HT10}
Brendan Hassett and Yuri Tschinkel.
\newblock Flops on holomorphic symplectic fourfolds and determinantal cubic
  hypersurfaces.
\newblock {\em J. Inst. Math. Jussieu}, 9(1):125--153, 2010.

\bibitem[Jia12]{Jiang}
Zhi Jiang.
\newblock A {N}oether-{L}efschetz theorem for varieties of {$r$}-planes in
  complete intersections.
\newblock {\em Nagoya Math. J.}, 206:39--66, 2012.

\bibitem[Las89]{Laszlo}
Yves Laszlo.
\newblock Th\'{e}or\`eme de {T}orelli g\'{e}n\'{e}rique pour les intersections
  compl\`etes de trois quadriques de dimension paire.
\newblock {\em Invent. Math.}, 98(2):247--264, 1989.

\bibitem[Man91]{Manivel}
Laurent Manivel.
\newblock Un th\'eor\`eme d'annulation pour les puissances ext\'erieures d'un
  fibr\'e ample.
\newblock {\em J. Reine Angew. Math.}, 422:91--116, 1991.

\bibitem[O'G86]{ogrady86}
Kieran~G. O'Grady.
\newblock The {H}odge structure of the intersection of three quadrics in an
  odd-dimensional projective space.
\newblock {\em Math. Ann.}, 273(2):277--285, 1986.

\bibitem[Shi04]{Shimada}
Ichiro Shimada.
\newblock Vanishing cycles, the generalized {H}odge conjecture and
  {G}r\"{o}bner bases.
\newblock In {\em Geometric singularity theory}, volume~65 of {\em Banach
  Center Publ.}, pages 227--259. Polish Acad. Sci. Inst. Math., Warsaw, 2004.

\bibitem[Tju75]{Tjurin}
A.~N. Tjurin.
\newblock The intersection of quadrics.
\newblock {\em Uspehi Mat. Nauk}, 30(6(186)):51--99, 1975.

\bibitem[Voi10]{VoisinConiveau}
Claire Voisin.
\newblock Coniveau 2 complete intersections and effective cones.
\newblock {\em Geom. Funct. Anal.}, 19(5):1494--1513, 2010.

\end{thebibliography}
\end{document}